\title{Descent and the $KH$-assembly map}
\author{Paul D.~Mitchener \\
University of Sheffield}
\newenvironment{proof}{\par \noindent{\bf Proof: }}{\hspace{\stretch{1}} $\Box$ \par \mbox{}}
\newcommand{\noproof}{\hspace{\stretch{1}} $\Box$}
\newtheorem{theorem}{Theorem}[section]
\newtheorem{proposition}[theorem]{Proposition}
\newtheorem{lemma}[theorem]{Lemma}
\newtheorem{corollary}[theorem]{Corollary}
{\theorembodyfont{\rmfamily}
\newtheorem{definition}[theorem]{Definition}
\newtheorem{example}[theorem]{Example}
}
\newenvironment{theorem*}{\par \medskip \noindent{\bf Theorem }}{\par \mbox{}}
\newenvironment{lemma*}{\par \medskip \noindent{\bf Theorem }}{\par \mbox{}}
\newcommand{\Hom}{\mathop{Hom}}
\newcommand{\Ob}{\mathop{Ob}}
\newcommand{\R}{{\mathbb R}}
\newcommand{\C}{{\mathbb C}}
\newcommand{\N}{{\mathbb N}}
\newcommand{\id}{\mathop{id}}
\newcommand{\KK}{\mathbb{K} \mathbb{K}}
\newcommand{\KH}{\mathbb{K} \mathbb{H}}
\newcommand{\Sp}{\mathop{Sp}}
\newcommand{\supp}{\mathop{supp}}
\begin{document}

\maketitle

\tableofcontents

\section*{Abstract}

In this article we show that a general notion of descent in coarse geometry can be applied to the study of injectivity of the $KH$-assembly map.  We also show that the coarse assembly map is injective in general for finite coarse $CW$-complexes.

\section{Introduction}
In \cite{Mitch13}, the author drew a general picture of the principle of descent from isomorphism conjectures in coarse geometry to the injectivity of assembly maps in the sense of \cite{DL,WW}.  This technique was applied to the analytic Novikov conjecture and the algebraic $K$-theory assembly map.

Our main purpose in this article is show how homotopy algebraic $K$-theory fits into this picture.  We also look at the stable versions of bivariant algebraic $K$-theory developed in \cite{Gark} to provide versions of the $KH$-isomorphism conjecture with coefficients.

The machinery to carry out these procedures is by and large already developed.  Firstly, there is the general descent machinery already mentioned.  In this article we give a new result saying that {\em any} coarse assembly map is an isomorphism for finite coarse $CW$-complexes.  It is an interesting question whether this could be extended, for example along the lines of \cite{Wright,Yu} to show that any coarse assembly map is an isomorphism for spaces with finite asymptotic dimension.  However, that is a project for another time.

Back to the present, we also use the bivariant algebraic $K$-theory spectra of \cite{Gark2}.  These are defined for algebras; for our purposes we extend the construction to algebroids using the same techniques as used by Joachim to define the $K$-theory of $C^\ast$-categories in \cite{Jo2}.

The properties of bivariant algebraic $K$-theory spectra are then applied to prove that a certain construction along the lines of that in \cite{ACFP} yields a coarse homology theory in the sense of \cite{Mitch6}.  The application of the descent machinery to the maps under consideration then follows.

\section{Coarse Assembly}

We begin by recalling some machinery from coarse geometry.  First, recall that a {\em coarse space} is a set $X$ along with a collection of priveged subsets $M\subseteq X\times X$ called {\em controlled sets}.  The collection of controlled sets is called a {\em coarse structure} on $X$, and is required to satisfy certain axioms.

To be precise, the collection of controlled sets is required to contain the diagonal, $\Delta_X \subseteq X\times X$.\footnote{This axiom is dropped for {\em non-unital} coarse speces; see \cite{Luu}.}  The controlled sets must be closed under finiste unions, taking subsets, reflections in the diagonal, and composition.  Here, by the {\em composition} of subsets $M_1,M_2 \subseteq X\times X$, we mean the set
\[ M_1 M_2 = \{ (x,z)\in X\times X \ |\ (x,y)\in M_1 \ (y,z)\in M_2 \textrm{ for some }y\in X
\} .  \]
 
We refer the reader to \cite{Roe6}, for example, for further
details.

Given a controlled set $M\subseteq X\times X$, and a subset
$S\subseteq X$, we write
$$M[S] = \{ y\in X \ |\ (x,y)\in M \textrm{ for some }x\in S \}$$

For a point $x\in X$, we write $M_x = M[\{ x \}]$.

If $X$ is a coarse space, and $f,g\colon S\rightarrow X$ are maps
into $X$, the maps $f$ and $g$ are termed {\em close} or {\em
coarsely equivalent} if the set $\{ (f(s) , g(s)) \ |\ s\in S \}$ is
controlled.  We call a subset $B\subseteq X$ {\em bounded} if the
inclusion $B\hookrightarrow X$ is close to a constant map, or
equivalently $B=M_x$ for some controlled set $M$ and some point
$x\in X$.

We also need the notion of a topological space with a compatible coarse structure.  To be precise, if
$X$ is a Hausdorff space, we call it a {\em coarse topological space} if it has a coarse structure where every controlled set is contained in an open controlled set (with the usual product topology on $X\times X$), and the closure of any bounded
set is compact.

If $X$ is a coarse topological space, we say the coarse structure is {\em compatible} with the topology.

The most important examples of coarse spaces to us are the following.  The first is standard.

\begin{example}
If $X$ is a proper metric space.  Equip $X$ with a coarse structure defined by defining the controlled
sets to be subsets of {\em neighbourhoods of the diagonal}:
$$N_R = \{ (x,y) \in X\times X \ |\ d(x,y)< R \}$$

Then $X$ is a coarse topological space.
\end{example}

Our second example comes from \cite{Mitch13}.

\begin{example} \label{compactification}
Let $X$ be a coarse topological space.  Suppose that $X$ is
a topologically dense subset of a Hausdorff space $\overline{X}$.  Call the coarse structure already defined on the space $X$ the {\em ambient coarse structure}, and set $\partial X = \overline{X}\backslash X$.  

Call an open subset $M\subseteq X\times X$ {\em strongly controlled} if:

\begin{itemize}

\item The set $M$ is controlled with respect to the ambient coarse
structure on $X$.

\item Let $\overline{M}$ be the closure of the set $M$ in the space
$\overline{X}$.  Then $\overline{M} \cap (\overline{X}\times \partial X) \cup (\partial X
\times \overline{X} )$ is contained in the diagonal of $\partial X$.

\end{itemize}

We define the {\em continuously controlled coarse structure} with respect to
$\overline{X}$ by saying that the controlled sets are composites of subsets
of strongly controlled open sets.

We write $X$ to denote the space $X$ with its ambient coarse
structure, and $X^\mathrm{cc}$ to denote the space $X$ with the new
continuously controlled coarse structure. 
\end{example}
 
It is shown in \cite{Mitch13} that $X^\mathrm{cc}$ is a coarse topological space.  Our next few examples are certain standard constructions of coarse spaces.

\begin{example}
Let $X$ be a coarse space, let $\sim$ be an equivalence relation on
$X$, and let $X/\sim$ be the set of equivalence classes.  Let $\pi
\colon X\rightarrow X/\sim$ be the quotient map sending each point
$x\in X$ to its equivalence class, $\pi (x)$.

We define the {\em quotient coarse structure} on $X/\sim$ by saying
a subset $M\subseteq X/\sim \times X/\sim$ is controlled if and only
if $M = \pi [M']$ for some controlled set $M'\subseteq
X\times X$.
\end{example}

\begin{example}
Let $X$ and $Y$ be coarse spaces.  Then we define the {\em product},
$X\times Y$ to be the Cartesian product of the sets $X$ and $Y$
equipped with the coarse structure defined by saying a subset
$M\subseteq (X\times Y)\times (X\times Y)$ is controlled if and only if we have controlled sets $M_1\subseteq X\times X$ and $M_2\subseteq Y\times Y$ such that
\[ M\subseteq \{ (u,v,x,y) \ |\ (u,x)\in M_1,\ (v,y)\in M_2 \} . \]
\end{example}

\begin{example}
Let $\{ X_i \ |\ i\in I \}$ be a collection of coarse spaces.  Then,
as a set, the {\em coarse disjoint union}, $\vee_{i\in I} X_i$ is
the disjoint union of the sets $X_i$.

A subset $M\subseteq (\vee_{i\in I} X_i)\times (\vee_{i\in
I} X_i)$ is controlled if it is a subset of a union of the form
\[ \left( \bigcup_{i\in I} M_i \right) \cup
\left( \bigcup_{i,j\in I} B_i \times B_j \right) \]
where each set $M_i\subseteq X_i \times X_i$ is controlled, and each
$B_j\subseteq X_j$ is bounded.
\end{example}

The following definition comes from \cite{Mitch4}.

\begin{definition}
Let $R$ be the topological space $[0,\infty )$ equipped with a coarse structure
compatible with the topology.  We call the space $R$ a {\em
generalised ray} if the following conditions hold.

\begin{itemize}

\item Let $M,N\subseteq R\times R$ be controlled sets.  Then the sum
\[ M+N = \{ (u+x,v+y)\ |\ (u,v)\in M,\ (x,y)\in N \} \]
is controlled.

\item Let $M\subseteq R\times R$ be a controlled set.  Then the set
\[ M^s = \{ (u,v)\in R\times R\ |\ x\leq u,v\leq y,\ (x,y)\in M \}
\]
is controlled.

\item Let $M\subseteq R\times R$ be a controlled set, and $a\in R$.
Then the set
\[ a+M = \{ (a+x,a+y)\ |\ (x,y)\in M \} \]
is controlled.

\end{itemize}

\end{definition}

The classic example of a generalised ray is the metric space $\R_+$ equipped with the bounded coarse structure.  There are others.

Now, a map between coarse spaces is called a {\em coarse map} if the image of a controlled set is controlled, and the pre-image of a bounded set is bounded.  The {\em coarse category} is the category of all coarse spaces and coarse maps.

We call a coarse map $f\colon X\rightarrow Y$ a {\em coarse equivalence} if there is a
coarse map $g\colon Y\rightarrow X$ such that the composites $g\circ
f$ and $f\circ g$ are close to the identities $1_X$ and $1_Y$
respectively.

Coarse spaces $X$ and $Y$ are said to be {\em coarsely equivalent} if there is a coarse equivalence between
them.

The notion of {\em coarse homotopy} is a weakening of this notion.  To be precise, we have the following.

\begin{definition}
Let $X$ be a coarse space equipped with a coarse map
$p\colon X\rightarrow R$.  Then we define the {\em $p$-cylinder of $X$}:
$$I_p X = \{ (x,t)\in X\times R \ |\ t\leq p(x)+1 \}$$

We define coarse maps $i_0,i_1\colon X\rightarrow I_pX$ by the
formulae $i_0 (x) = (x,0)$ and $i_1 (x)=(x,p(x)+1)$ respectively.
\end{definition}

\begin{definition}
Let $f_0,f_1\colon X\rightarrow Y$ be coarse maps.  An {\em elementary coarse homotopy}
between $f_0$ and $f_1$ is a coarse map
$H\colon I_p X\rightarrow Y$ for some $p\colon X\rightarrow R$ such that $f_0 = H\circ i_0$
and $f_1 = H\circ i_1$.

More generally, we call the maps $f_0$ and $f_1$ {\em coarsely
homotopic} if they can be linked by a finite sequence of elementary coarse
homotopies.
\end{definition}

Note that close coarse maps are always coarsely homotopic.

A coarse map $f\colon X\rightarrow Y$ is termed a {\em coarse
homotopy equivalence} if there is a coarse map $g\colon Y\rightarrow
X$ such that the compositions $g\circ f$ and $f\circ g$ are coarsely
homotopic to the identities $1_X$ and $1_Y$ respectively.

Before we introduce the main piece of machinery from \cite{Mitch13}, we need two more technical notions.

\begin{definition}
Let $X$ be a coarse space.  Then we call a decomposition $X=A\cup B$
{\em coarsely excisive} if for every controlled set $m\subseteq X\times X$ there is a controlled set
$M\subseteq X\times X$ such that $m(A)\cap m(B)\subseteq M(A\cap B)$.
\end{definition}

\begin{definition}
We call a coarse space $X$ {\em flasque} if there is a map
$\tau \colon X\rightarrow X$ such that:

\begin{itemize}

\item Let $B\subseteq X$ be bounded.  Then there exists $N\in \N$
such that $\tau^n[X]\cap B = \emptyset$ for all $n\geq N$.

\item Let $M\subseteq X\times X$ be controlled.  Then the union
$\bigcup_{n\in \N} \tau^n [M]$ is controlled.

\item The map $\tau$ is close to the identity map.

\end{itemize}

\end{definition}

A generalised ray is clearly flasque.  We are now ready for the main construction of this section.

\begin{definition}
We call a functor, $E$, from the coarse category to the category of
spectra {\em coarsely excisive} if the following conditions hold.

\begin{itemize}

\item The spectrum $E(X)$ is weakly contractible whenever the coarse
space $X$ is flasque.

\item The functor $E$ takes coarse homotopy equivalences to weak homotopy
equivalences of spectra.

\item For a coarsely excisive decomposition $X=A\cup B$ we have a homotopy
push-out diagram
\[ \begin{array}{ccc}
E(A\cap B) & \rightarrow & E(A) \\
\downarrow & & \downarrow \\
E(B) & \rightarrow & E(X) \\
\end{array} . \]

\end{itemize}

\end{definition}

Now, let $X$ be a coarse topological space.  Then we define the {\em open square}, ${\mathcal S}X$, to be the space $X\times [0,1)$ equipped with the continuously controlled coarse structure arising from considering ${\mathcal S}X$ as a dense subset of $X\times [0,1]$.  It is shown in \cite{Mitch13} that the open square ${\mathcal S}X$ is always flasque.

We define the {\em open} and {\em closed cones} to be the quotients
\[ {\mathcal O}X = \frac{{\mathcal S}X}{X\times \{ 0 \} } \qquad 
 {\mathcal C}X = \frac{X\times [0,1]}{X\times \{ 0 \} } \]
respectively.

The following result also comes from \cite{Mitch13}.

\begin{lemma} \label{M13}
Let $X$ be a coarse topological space, and let $E$ be a coarsely excisive functor.  Then we have a natural weak fibration of spectra
\[ E(X)\rightarrow E({\mathcal C}X) \rightarrow E({\mathcal O}X ) . \]

Further, if the space $X$ is flasque, then so is the closed cone ${\mathcal C}X$.
\noproof
\end{lemma}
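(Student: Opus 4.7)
The plan is to deduce the weak fibration from the coarse excision axiom applied to a decomposition of $\mathcal{O}X$, and to verify flasqueness of $\mathcal{C}X$ by transporting the flasque structure of $X$. Concretely, I would decompose $\mathcal{O}X = A \cup B$, where $A$ is the image in $\mathcal{O}X$ of $X \times [1/2, 1)$ and $B$ is the image of $X \times [0, 2/3]$, so that $A \cap B$ is the image of $X \times [1/2, 2/3]$. Three claims need checking: that $A$ is flasque, witnessed by the map $(x, t) \mapsto (x, (t+1)/2)$ which iteratively pushes toward the absent top $X \times \{1\}$ and thereby escapes every bounded set (bounded sets being forced away from $X \times \{1\}$ by the continuously controlled coarse structure); that $B$ is coarsely equivalent to $\mathcal{C}X$, essentially because both are closed cones on $X$ differing only by a bounded rescaling of the interval factor; and that $A \cap B$ is coarsely equivalent to $X$, since the factor $[1/2, 2/3]$ is bounded. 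Granting these together with coarse excisiveness of the decomposition, the homotopy pushout
\[ \begin{array}{ccc} E(A \cap B) & \to & E(A) \\ \downarrow & & \downarrow \\ E(B) & \to & E(\mathcal{O}X) \end{array} \]
supplied by the excision axiom becomes, after substituting coarse equivalences and using $E(A) \simeq \ast$,
\[ \begin{array}{ccc} E(X) & \to & \ast \\ \downarrow & & \downarrow \\ E(\mathcal{C}X) & \to & E(\mathcal{O}X) \end{array} \]
which exhibits $E(\mathcal{O}X)$ as the cofibre of $E(X) \to E(\mathcal{C}X)$, giving the asserted weak fibration.

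For the flasqueness of $\mathcal{C}X$ when $X$ is flasque with structure map $\tau \colon X \to X$, I would transport $\tau$ to a map $\widetilde\tau \colon \mathcal{C}X \to \mathcal{C}X$ by setting $\widetilde\tau(\pi(x, t)) = \pi(\tau(x), t)$, which is well-defined on the quotient because $\tau$ sends $X$ to $X$. The three flasqueness axioms for $\widetilde\tau$ would then follow from their counterparts for $\tau$, using that the coarse structure on $\mathcal{C}X$ is the quotient of the coarse structure on $X \times [0, 1]$.

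The main obstacle will be verifying that $\mathcal{O}X = A \cup B$ is coarsely excisive. In the continuously controlled coarse structure inherited from $X \times [0, 1]$ controlled sets pinch as they approach the boundary $X \times \{1\}$, and establishing the required inclusion $m(A) \cap m(B) \subseteq M(A \cap B)$ for some controlled $M$ depending on $m$ calls for a careful unwinding of how strongly controlled sets interact with the cuts at $t = 1/2$ and $t = 2/3$. Once this technical point is resolved the remainder of the argument is formal.
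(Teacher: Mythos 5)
The paper itself offers no proof of this lemma (it is imported from \cite{Mitch13}), so I can only judge your argument on its own terms, and unfortunately it contains a fatal misidentification. Your set $B=\pi[X\times[0,2/3]]$, with the subspace coarse structure it must carry for the excision axiom to apply, is a \emph{bounded} subset of ${\mathcal O}X$, hence coarsely equivalent to a point --- not to ${\mathcal C}X$. Indeed, the set $M'=\{((x,0),(x,t))\ |\ x\in X,\ t\leq 2/3\}$ sits inside an open set of the form $\{((x,s),(y,t))\ |\ (x,y)\in V,\ s<0.1,\ t<0.7\}$ with $V$ an open controlled neighbourhood of $\Delta_X$; this open set is ambient-controlled and its closure misses $X\times\{1\}$ entirely, so it is strongly controlled. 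Hence $\pi[M']=\{*\}\times B$ is controlled and $B=(\{*\}\times B)[\{*\}]$ is bounded. The ``bounded rescaling of the interval factor'' you invoke is a coarse equivalence for product structures, but it cannot carry $B$ onto ${\mathcal C}X$: the closed cone retains a genuine copy of $X$ at $t=1$ (this is what makes the map $E(X)\rightarrow E({\mathcal C}X)$ meaningful), whereas everything in $B$ is swallowed by the cone point. Since ${\mathcal C}X$ has strictly more points than ${\mathcal O}X$, it can never be realised as a piece of a decomposition of ${\mathcal O}X$; the decomposition has to be of ${\mathcal C}X$ itself. The error propagates: $A\cap B\subseteq B$ is also bounded, so it is not coarsely equivalent to $X$; and $A$, whose complement in ${\mathcal O}X$ is the bounded set $B$, is coarsely equivalent to ${\mathcal O}X$ itself, so if $A$ were flasque your argument would prove $E({\mathcal O}X)$ weakly contractible for \emph{every} $X$ --- which would make the coarse assembly map always an equivalence and render Theorem \ref{ECW} and the whole descent programme vacuous. (In fact your witness $\tau(x,t)=(x,(t+1)/2)$ fails the second flasqueness axiom: for a controlled $M$ containing a pair $((x,s),(y,s))$ with $x\neq y$, the union $\bigcup_n\tau^n[M]$ accumulates at the off-diagonal boundary point $((x,1),(y,1))$, so it is not continuously controlled.)

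The second half has a separate gap: your map $\widetilde\tau(\pi(x,t))=\pi(\tau(x),t)$ fixes the cone point $c=\pi(x,0)$, and $\{c\}$ is bounded, so $\widetilde\tau^n[{\mathcal C}X]\cap\{c\}\neq\emptyset$ for all $n$ and the first flasqueness axiom fails. Any correct witness for the flasqueness of ${\mathcal C}X$ must move points towards infinity in the cone direction as well as using $\tau$ in the $X$-direction, and the verification of the second axiom then requires the interaction of the two motions rather than a coordinatewise transport of the axioms for $\tau$.
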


\begin{definition}
The boundary map $\partial \colon \Omega E({\mathcal O}X )\rightarrow E(X)$ of the above weak fibration is called the {\em coarse assembly map} associated to $E$.
\end{definition}

It is shown in \cite{Mitch13} that the map $\partial$ is a weak equivalence if and only if the space $E({\mathcal C}X)$ is weakly contractible.  If the space $X$ is flasquje, then so is the closed cone ${\mathcal C}X$.  Thus the map $\partial$ is a weak equivalence whenever $X$ is flasque.

Our aim in the rest of this section is to generalise this observation.

\begin{definition}
Let $X$ and $Y$ be coarse spaces.  Let $A\subseteq X$, and let $f\colon A\rightarrow Y$ be a coarse map.  Then we define the space obtained by {\em attaching} $X$ to $Y$ {\em along} $A$ by the map $f$ to be the quotient $X\cup_A Y = {X\vee Y}/{\sim}$, where $\sim$ is the equivalence relation defined by saying $a\sim f(a)$ whenever $a\in A$.
\end{definition}

Now, let $R$ be a generalised ray.  We define the {\em coarse $R$-disk} and {\em coarse $R$-sphere} of dimensions $n$ and $n-1$ respectively to be the spaces
\[ D_R^n = (R\vee R)^n \times R \qquad S_R^{n-1} = (R\vee R)^n \times \{ 0 \} . \]

Note that $S_R^n \subseteq D_R^n$.  It is shown in \cite{MiNS} that the coarse disk $D_R^n$ and generalised ray $R$ are coarsely homotopy-equivalent.\footnote{There is a similar result in \cite{Mitch4} for a more primitive notion of homotopy.}  Given a coarse map $f\colon S_R^n \rightarrow Y$, we can form the coarse space $D_R^n \cup_{S_R^n} Y$.

\begin{lemma} \label{attach}
Let $\pi \colon D_R^n \vee Y \rightarrow D_R^n \cup_{S_R^n}Y$ be the quotient map.  Then we have a coarsely excisive decomposition $D_R^n \cup_{S_R^n} Y= \pi [D_R^n ] \cup \pi [Y]$, and the spaces $\pi [D_R^n]$ and $\pi [Y]$ are coarsely equivalent to the spaces $D_R^n$ and $Y$ respectively.  Further, the intersection $\pi [D_R^n] \cap \pi  [Y]$ is coarsely equivalent to the coarse sphere $S_R^{n-1}$.
\end{lemma}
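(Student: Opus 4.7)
The plan is to work directly from the definition of the quotient coarse structure.  A set is controlled in $D_R^n\cup_{S_R^{n-1}}Y$ precisely when it is of the form $\pi[M']$ for some controlled $M'\subseteq (D_R^n\vee Y)\times (D_R^n\vee Y)$, and by the coarse disjoint union construction every such $M'$ is contained in a set
\[ M_D\cup M_Y\cup (B_D\times B_Y)\cup (B_Y\times B_D) \]
with $M_D\subseteq D_R^n\times D_R^n$, $M_Y\subseteq Y\times Y$ controlled and $B_D\subseteq D_R^n$, $B_Y\subseteq Y$ bounded.  This explicit normal form is what drives every verification below.

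The set-theoretic decomposition $D_R^n\cup_{S_R^{n-1}}Y=\pi[D_R^n]\cup\pi[Y]$ is immediate from surjectivity of $\pi$.  To see that $\pi|_Y\colon Y\to \pi[Y]$ is a coarse equivalence one notes it is a bijection (no identifications take place among points of $Y$), and a direct restriction of the normal form shows that both directions preserve controlled sets.  The analogous check for $\pi|_{D_R^n}$ proceeds by choosing a set-theoretic section $\sigma$ which is the identity off $\pi[S_R^{n-1}]$, so that $\sigma\circ \pi|_{D_R^n}$ differs from $1_{D_R^n}$ only within a controlled neighbourhood of $S_R^{n-1}$ and hence is close to the identity.

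For coarse excisiveness, given a controlled $m$ in the quotient I would lift to $M'$ in normal form, and observe that any point $p$ in $m[\pi[D_R^n]]\cap m[\pi[Y]]$ has preimages in $D_R^n\vee Y$ linked by $M'$ to both summands.  Since the only cross-identifications in the quotient occur along $S_R^{n-1}\sim f(S_R^{n-1})$, such preimages must lie in a controlled thickening of $S_R^{n-1}\cup f(S_R^{n-1})$ inside $D_R^n\vee Y$.  Pushing this thickening down then yields the required $M$ with $m[\pi[D_R^n]]\cap m[\pi[Y]]\subseteq M[\pi[D_R^n]\cap \pi[Y]]$.

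The main obstacle is the identification $\pi[D_R^n]\cap \pi[Y]\simeq S_R^{n-1}$.  As a set the intersection is in bijection with $f(S_R^{n-1})$, but its coarse structure is the restriction of the quotient structure, which a priori sees both the pushforward of controlled sets from $S_R^{n-1}$ along $f$ and the restriction to $f(S_R^{n-1})$ of controlled sets from $Y$.  The task is to verify that these two sources of control agree up to coarse equivalence; concretely, that every controlled set on $f(S_R^{n-1})$ coming from $Y$ is dominated, after a controlled enlargement inside the quotient, by the image under $f$ of a controlled set from $S_R^{n-1}$.  This step is where the specific geometry of the coarse $R$-sphere and its inclusion into the disk enters, and is where I would expect the most careful bookkeeping.
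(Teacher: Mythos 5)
The set-theoretic decomposition and the identification of $\pi[Y]$ with $Y$ are unproblematic, but your argument for $\pi[D_R^n]$ being coarsely equivalent to $D_R^n$ has a genuine gap, and it is exactly the step where the paper does something different. You propose a section $\sigma$ of $\pi$ over $\pi[D_R^n]$ that is the identity off $\pi[S_R^{n-1}]$, and conclude that $\sigma\circ\pi|_{D_R^n}$ is close to $1_{D_R^n}$ because it ``differs from the identity only within a controlled neighbourhood of $S_R^{n-1}$.'' That inference is not valid: closeness requires the graph $\{(z,\sigma(\pi(z)))\ |\ z\in D_R^n\}$ to be controlled, and a map can fix everything outside $S_R^{n-1}$ while displacing points of $S_R^{n-1}$ by arbitrarily large amounts. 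This actually happens whenever the attaching map $f$ brings distant points of the sphere together: if $f(a)=f(a')$ with $a,a'$ far apart in $S_R^{n-1}$, then $\pi(a)=\pi(a')$ is a single point of the quotient, any section must pick one representative, and the other is moved a long way. More generally, the subspace coarse structure on $\pi[D_R^n]$ contains sets of the form $\pi[M_Y]$ linking $\pi(a)$ to $\pi(a')$ whenever $(f(a),f(a'))$ is controlled in $Y$, and no locality argument about where $\sigma\pi$ differs from the identity addresses these.

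The paper's proof avoids constructing a section over the gluing locus altogether. It replaces $D_R^n$ by the coarsely equivalent subspace $(D_R^n)'=\{(x,t)\in(R\vee R)^n\times R\ |\ t\geq 1\}$, which is disjoint from $S_R^{n-1}=(R\vee R)^n\times\{0\}$; there $\pi$ is injective, no identifications interfere, and $\pi|_{(D_R^n)'}$ is a coarse equivalence onto its image, while $\pi[(D_R^n)']$ is coarsely dense in $\pi[D_R^n]$ because $\pi$ preserves controlled sets. This yields the chain $D_R^n\sim(D_R^n)'\sim\pi[(D_R^n)']\sim\pi[D_R^n]$. The same shift, applied to the slice $(R\vee R)^n\times\{1\}$, is precisely what settles the intersection statement $\pi[D_R^n]\cap\pi[Y]=\pi[S_R^{n-1}]\sim S_R^{n-1}$ --- the part you explicitly leave open as ``careful bookkeeping.'' (The excisiveness of the decomposition and the statement about $\pi[Y]$ are simply quoted from an earlier paper, so no further work is expected there.) Without the device of shifting off the gluing locus, both the disk identification and the intersection identification remain unproved in your write-up.
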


\begin{proof}
Apart from the comment about the intersection, this result follows in proposition 5.5 of \cite{Mitch4}, though it is not shown explicitly in \cite{Mitch4} that the spaces $\pi [D_R^n]$ and $D_R^n$ are coarsely equivalent.  So we do this step here.

Observe that the spaces $D^n_R$ and
\[ (D_R^n)' = \{ (x,t) \in (R\vee R)^n \times R \ |\ t\geq 1 \} \]
are coarsely equivalent.  Now, maps which preserve controlled sets, such as $\pi$, preserve coarsely equivalent spaces, so the spaces $\pi [D_R^n]$ and $\pi [(D_R^n)']$ are coarsely equivalent.

But by definition, the map $\pi |_{(D_R^n)'}  \colon (D_R^n)' \rightarrow D_R^n \cup_{S_R^n}Y$ is a coarse equivalence onto its image.  To summarise, we have a chain of coarse equivalences
\[ D_R^n \sim (D_R^n)' \sim \pi [ (D_R^n)' ] \sim \pi [D_R^n ]  . \]

As for the statement about intersections, the spaces $S_R^n = (R\vee R)^n \times \{ 0 \}$ and $(R\vee R)^n \times \{ 1 \}$ are certainly coarsely equivalent.  Therefore the images $\pi [S_R^n]$ and $\pi [(R\vee R)^n \times \{ 1 \}$ are coarsely equivalent. 

But the space $\pi [ (R\vee R)^n \times \{ 1\}$ is coarsely equivalent to $S_R^n$, and $\pi [S_R^n ]=  \pi [D_R^n ]\cap \pi [Y]$.  It follows that the spaces $S_R^n$ and $\pi [D_R^n ]\cap \pi [Y]$ are coarsely equivalent.
\end{proof}

The following also comes from \cite{Mitch4}.

\begin{definition} \label{CW}
We call a coarse space $X$ a {\em finite coarse $CW$-complex} if we have subsets
\[ X_0 \subseteq X_1 \subseteq X_2 \subseteq \cdots \subseteq X_n = X \]
where:

\begin{itemize}

\item The space $X_0$ is a finite disjoint union of generalised rays.

\item The space $X_n$ is obtained from $X_{n-1}$ by attaching finitely many coarse $n$-dimensional disks along coarse $(n-1)$-dimensional spheres.

\end{itemize}

\end{definition}

We now come to the promised major result of this section.

\begin{theorem} \label{ECW}
Let $E$ be a coarsely excisive functor, and let $X$ be a finite coarse $CW$-complex.  Then the coarse assembly map associated to $E$ is a weak equivalence.
\end{theorem}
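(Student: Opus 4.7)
By Lemma~\ref{M13}, the coarse assembly map at $X$ is a weak equivalence if and only if $E(\mathcal{C}X)$ is weakly contractible, so the plan is to show the latter for every finite coarse $CW$-complex by induction on the length $n$ of the filtration in Definition~\ref{CW}.

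For the base case $n=0$, the space $X_0$ is a finite coarse disjoint union of generalised rays.  Each generalised ray is flasque, and a finite coarse disjoint union of flasque spaces is again flasque via the component-wise flasque map.  Hence $X_0$ is flasque, the cone $\mathcal{C}X_0$ is flasque by Lemma~\ref{M13}, and so $E(\mathcal{C}X_0)$ is weakly contractible.

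For the inductive step, write $X_n = D\cup_S X_{n-1}$, where $D$ is a finite coarse disjoint union of coarse disks $D_R^{k_i} = (R\vee R)^{k_i}\times R$ and $S$ the corresponding disjoint union of coarse spheres $S_R^{k_i-1} = (R\vee R)^{k_i}$, each $k_i\geq 1$.  Each disk is flasque via the flasque map of its final $R$ factor (acting by the identity on the other coordinates), and each sphere is flasque via the flasque map of one of its $R\vee R$ factors; taking coarse disjoint unions preserves flasqueness, so Lemma~\ref{M13} gives $E(\mathcal{C}D)\simeq *$ and $E(\mathcal{C}S)\simeq *$, while the inductive hypothesis supplies $E(\mathcal{C}X_{n-1})\simeq *$.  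Lemma~\ref{attach} provides a coarsely excisive decomposition $X_n = \pi[D]\cup \pi[X_{n-1}]$ whose pieces and intersection are coarsely equivalent to $D$, $X_{n-1}$, and $S$ respectively; applying the coarsely excisive functor $E$ turns this into a homotopy pushout square.  Granted that the closed cone functor $\mathcal{C}$ sends the decomposition to a coarsely excisive decomposition of $\mathcal{C}X_n$ with intersection $\mathcal{C}(\pi[D]\cap \pi[X_{n-1}])$, the composition $E\circ \mathcal{C}$ also delivers a homotopy pushout; three of its four corners being weakly contractible then forces $E(\mathcal{C}X_n)$ to be weakly contractible as well, completing the induction.

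The main obstacle is precisely this compatibility of the cone construction with coarsely excisive decompositions, i.e.\ that $\mathcal{C}(A\cup B) = \mathcal{C}A\cup \mathcal{C}B$ is coarsely excisive with intersection $\mathcal{C}(A\cap B)$ whenever $X = A\cup B$ is; equivalently, via the fibration of Lemma~\ref{M13}, one could work with the open cone $\mathcal{O}$ in place of $\mathcal{C}$.  This is a direct inspection at the level of the continuously controlled coarse structure on $\mathcal{S}X = X\times [0,1)$: not deep, but fiddly, and is where the real work of the proof lies.
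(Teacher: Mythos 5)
Your overall strategy (reduce to weak contractibility of $E(\mathcal{C}X)$ via Lemma~\ref{M13}, then induct up the skeleta using Lemma~\ref{attach} and Mayer--Vietoris) matches the paper, and your observation that each disk $D_R^k=(R\vee R)^k\times R$ is flasque via the shift on its last $R$-factor is a legitimate (slightly more direct) substitute for the paper's appeal to the coarse homotopy equivalence $D_R^k\simeq R$. But there is a genuine gap at the spheres: your claim that $S_R^{k-1}=(R\vee R)^k$ is ``flasque via the flasque map of one of its $R\vee R$ factors'' is false, because $R\vee R$ itself is not flasque. For the componentwise shift $\tau=\tau_1\vee\tau_2$, take the controlled set $M=\{(b_1,b_2)\}$ with $b_1,b_2$ in different rays (a product of bounded sets, hence controlled in the coarse disjoint union); then $\bigcup_n\tau^n[M]$ is a cross-component set with unbounded projections, so it is not controlled, and the second flasqueness axiom fails. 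More decisively, $R\vee R$ is coarsely equivalent to $\mathbb{Z}$ (and $S_R^m$ to $\mathbb{R}^{m+1}$), and a Mayer--Vietoris argument for $\mathbb{Z}=\mathbb{Z}_{\geq 0}\cup\mathbb{Z}_{\leq 0}$ shows $E(R\vee R)$ is essentially a suspension of $E(\mathrm{pt})$, which is not weakly contractible for a general coarsely excisive $E$; so no flasque structure can exist. Since your inductive step needs $E(\mathcal{C}S)\simeq *$ as one of the three contractible corners of the pushout square, this hole is essential.

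The paper fills exactly this hole with a second induction, on the dimension of the sphere: $E(\mathcal{C}S_R^0)$ is handled as a base case, and for the step one uses the coarsely excisive decomposition $S_R^{p+1}=A\cup B$ with $A,B$ coarsely equivalent to $D_R^{p+1}$ and $A\cap B$ coarsely equivalent to $S_R^p$, so that $E(\mathcal{C}S_R^{p+1})$ is squeezed in a Mayer--Vietoris sequence between contractible disk terms and the inductively contractible $E(\mathcal{C}S_R^p)$. You should replace your flasqueness claim for spheres with this argument. Two smaller remarks: the compatibility of $\mathcal{C}$ with coarsely excisive decompositions, which you flag and defer, is indeed used (the paper uses it silently as well, applying the pushout axiom to the cones of the pieces); and your lumping of all $n$-cells into a single disjoint union is harmless but the one-cell-at-a-time induction avoids having to re-examine which coarse disjoint unions are flasque.
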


\begin{proof}
We will show that $E({\mathcal C}X)$ is weakly contractible.

Suppose we have a decomposition
\[ X_0 \subseteq X_1 \subseteq X_2 \subseteq \cdots \subseteq X_n = X \]
as in the above definition.

First, we know that a generalised ray is flasque, and it is easy to check that a finite coarse disjoint union of flasque spaces is flasque.  Hence $X_0$ is flasque.  By lemma \ref{M13}, the closed cone of a flasque space is flasque, so by definition of an excisive functor, $E({\mathcal C}X_0)$ is contractible.

We now work by induction.  It suffices to show that if $E({\mathcal C}Y)$ is weakly contractible, and we have a map $f\colon S_R^{k-1} \rightarrow Y$, then the spectrum $E({\mathcal C}(D_R^k \cup_{S_R^{k-1}Y} Y))$ is also weakly contractible.

By lemma \ref{attach} and the homotopy pushout axiom in the definition of a coarsely excisive functor, we have a long exact sequence of stable homotopy groups
\[ \pi_n E({\mathcal C}D_R^k ) \oplus \pi_n E({\mathcal C}Y ) \rightarrow \pi_n E({\mathcal C} (D_R^k \cup_{S_R^{k-1}Y} Y)) \rightarrow \pi_{n-1} E({\mathcal C} S_R^{k-1}) \rightarrow \pi_{n-1} E({\mathcal  C}D_R^k ) \oplus \pi_{n-1} E({\mathcal C}Y ) . \]

Now we know that $E({\mathcal C}Y)$ is weakly contractible.  The coarse space $D_R^k$ is coarsely homotopy-equivalent to the flasque space $R$, so by lemma \ref{M13}, the space $E({\mathcal C}D_R^k)$ is weakly contractible.  It therefore suffices to prove that the spectrum $E({\mathcal C}S_R^m)$ is weakly contractible for each $m\in \N$.

The space $S_R^0$ is a coarse disjoint union of two generalised rays, so $E({\mathcal C}S_R^0)$ is weakly contractible as argued above.  We now work again by induction.  Suppose the spectrum $E({\mathcal C}S_R^p)$ is weakly contractible; we need to show that $E({\mathcal C}S_R^{p+1})$ is weakly contractible.

Observe that we have a coarsely excisive decomposition $S_R^{p+1} = A\cup B$ where $A$ and $B$ are both coarsely equivalent to the coarse disk $D_R^{p+1}$, and $A\cap B$ is coarsely equivalent to the coarse sphere one dimension lower, $S_R^p$.  Then it follows from an argument similar to the exact sequence argument three paragraphs above that the spectrum $E(S_R^{p+1})$ is weakly contractible, and we are done.
\end{proof}

\section{Bivariant Algebraic $K$-theory}

Let $R$ be a commutative ring with identity, and let $A$ be an $R$-algebra.  Let $A[x]$ be the algebra of polynomials over $A$ in one variable.  Define algebra homomorphisms $\partial_0, \partial_1 \colon A[x]\rightarrow A$ by writing $\partial_i (f) = f(i)$.

As in \cite{Ger}, given algebra homomorphisms $\alpha ,\beta \colon A\rightarrow B$, we call an algebra homomorphism $H \colon A\rightarrow A\rightarrow B[x]$ an {\em elementary algebraic homotopy} from $\alpha$ to $\beta$ if $\partial_0 \circ H = \alpha$ and $\partial_1 \circ H = \beta$.  More generally, we call $\alpha$ and $\beta$ {\em algebraically homotopic} if they can be linked by a finite chain of algebraic homotopies, and write $\alpha \simeq \beta$.

Algebraic $K$-theory is {\em not} invariant under algebraic homotopies.  However, in \cite{We}, the definition of algebraic $K$-theory is modified to define a series of groups, $KH_n (A)$, called {\em homotopy algebraic $K$-theory groups}.

In another world, that of complex $C^\ast$-algebras, there is a well-established notion of {\em bivariant $K$-theory groups}, $KK_n (A,B)$, defined for $C^\ast$-algebras $A$ and $B$; see \cite{Kas3}, or \cite{Hig3,Ska} for overviews.  These bivariant $K$-theory groups generalise ordinary $K$-theory in that $KK_n (\C , A) = K_n (A)$ for any $C^\ast$-algebra $A$.

Now, in \cite{CoT}, and independently in \cite{Gark1}, there are constructions of {\em bivariant algebraic $K$-theory} groups $kk_n (A,B)$ for $R$-algebras $A$ and $B$.  These groups have certain universal properties that ensure they are isomorphic.\footnote{For the record, we are using the matrix-stable theory here from \cite{Gark1}; we need matrix stability later on in this article.}  The article \cite{Gark2} represents this bivariant $K$-theory by spectra.  Specifically, we have the following result.

\begin{theorem}
Let $\mathcal R$ be the category of $R$-algebras.  Let $\Sp$ be the category of spectra. We have a functor $\KK \colon {\mathcal R}^\mathrm{op} \times {\mathcal R} \rightarrow \Sp$ with the following properties.

\begin{itemize}

\item Let $A$, $B$ and $C$ be $R$-algebras.  Let $\alpha ,\beta \colon B\rightarrow C$ be algebraically homotopic.  Then the induced maps $\alpha_\ast , \beta_\ast \colon \KK (A,B)\rightarrow \KK (A,C)$ are equal, and the induced maps $\alpha^\ast , \beta^\ast \colon \KK (C,A)\rightarrow \KK (C,B)$ are equal.

\item  Let 
\[ 0 \rightarrow A\stackrel{i}{\rightarrow} B\stackrel{j}{\rightarrow} C\rightarrow 0 \]
be a split exact short exact sequence of $R$-algebras.  Let $D$ be another $R$-algebra.  Then we have induced homotopy fibrations of spectra
\[ \KK (D,A)\stackrel{i_\ast}{\rightarrow} \KK (D,B)\stackrel{j_\ast}{\rightarrow} \KK (D,C)\rightarrow 0 \]
and
\[ \KK (C,D) \stackrel{j^\ast}{\rightarrow} \KK (B,D) \stackrel{i^\ast}{\rightarrow} \KK (A,D) \rightarrow 0 . \]

\item Let $A$ and $B$ be $R$-algebras.  Let $M_n (A)$ be the algebra of $n\times n$ matrices with values in $A$, and let $M_\infty (A)$ be the direct limit of the sequence of matrix algebras $(M_n(A))$ under the top-left inclusions $x\mapsto \left( \begin{array}{cc}
x & 0 \\
0 & 0 \\
\end{array} \right)$.  Then the homomorphism $A\rightarrow M_\infty (A)$ defined by top-left inclusion induces weak equivalences of spectra $\KK (M_n(A),B) \rightarrow \KK (A,B)$ and $\KK (B,A) \rightarrow \KK (B,M_n (A))$.

\item Let $A$ be an $R$-algebra.  Then we have a natural isomorphism $\pi_n \KK (R,A)\cong KH_n (A)$ for all $n$.

\end{itemize}

\noproof
\end{theorem}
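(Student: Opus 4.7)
The plan is to construct $\KK$ along the lines of Garkusha's approach in \cite{Gark2} and then verify the four listed properties. The overall strategy is to define $\KK(A,B)$ as a spectrum whose zeroth space presents the morphisms $A \to B$ in a universal homotopy category of $R$-algebras that has been forced to be matrix-stable, polynomially homotopy invariant, and split-excisive.

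First I would introduce the algebraic loop-algebra $\Omega A = \ker(A[x] \to A \oplus A)$, consisting of polynomials that vanish at $x = 0$ and $x = 1$, together with the matrix-stabilization $A \mapsto M_\infty A$. Using these ingredients I would define classifying mapping spaces $[A, B]_n$ via iterated loop-algebra extensions, and assemble the resulting bigraded structure into an $\Omega$-spectrum $\KK(A,B)$, bifunctorial in $A$ and $B$ by construction.

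The verification then splits into pieces. Polynomial homotopy invariance and matrix stability hold essentially by fiat from the universal setup. For the homotopy fibration property, a split exact sequence $0 \to A \to B \to C \to 0$ produces a split short exact sequence of classifying spaces at each loop level; stability of the construction then converts this into a homotopy fibration of spectra in each variable separately. The identification of $\pi_n \KK(R, A)$ with $KH_n(A)$ reduces to comparing two universal characterizations of the matrix-stable, polynomially homotopy invariant, split-excisive extension of algebraic $K$-theory of $R$-algebras.

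The main obstacle is this last identification. Both $KH_n(A)$ and $\pi_n \KK(R, A)$ agree with $K_n(A)$ on regular Noetherian rings and both satisfy matrix stability, polynomial homotopy invariance, and excision for split extensions; but upgrading this coincidence to a natural isomorphism of functors requires matching Weibel's simplicial-resolution construction of $KH$ against the suspension-based presentation of $\KK$, and checking that the connecting maps attached to split extensions coincide on both sides. This comparison is the technical core of the argument and the reason the result is quoted from \cite{Gark2} rather than proven from scratch here.
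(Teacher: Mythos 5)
The paper gives no proof of this theorem: it is quoted verbatim from \cite{Gark2} (note the \noproof), so there is no argument of the author's to compare yours against. Your sketch is a fair pr\'ecis of Garkusha's construction --- loop algebras, matrix stabilization, and a universal matrix-stable, polynomially homotopy invariant, split-excisive theory --- and it correctly isolates the identification $\pi_n \KK (R,A)\cong KH_n(A)$ as the genuinely hard step, which is precisely what both you and the paper are delegating to the reference.
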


We call an algebra homomorphism $\alpha \colon A\rightarrow B$ an {\em stable algebraic homotopy equivalence} if there is an algebra homomorophism $\beta \colon B\otimes_R M_\infty (R) \rightarrow A \otimes_R M_\infty (R)$ such that $\beta \circ (\alpha \otimes \id_{M_\infty (R)}) \simeq \id_{M_\infty (A)}$ and $(\alpha \otimes \id_{M_\infty (R)}) \circ \beta \simeq \id_{M_\infty (B)}$.  By the above theorem, given an $R$-algebra $C$, a stable algebraic homotopy equivalence $\alpha \colon A\rightarrow B$ induces an equivalence of spectra $\alpha_\ast \colon \KH (A)\rightarrow \KH (B)$.

Now, we would like a version of homotopy algebraic $K$-theory associated to a coarse space and a fixed $R$-algebra, $D$, creating a coarsely excisive functor with 'coefficients' in the spectrum $\KK (D,R)$.  In order to do this, we need to extend the above from algebras to algebroids.

To be more precise about what we need, given a ring $R$, recall (see \cite{Mi}) that a category $\mathcal A$ is termed a {\em unital $R$-algebroid} if each morphism set $\Hom
(a,b)_{\mathcal A}$ is a left $R$-module, and composition of
morphisms
\[ \Hom (b,c)_{\mathcal A}\times \Hom (a,b)_{\mathcal A}\rightarrow
\Hom (a,c)_{\mathcal A} \]
is $R$-bilinear.

Non-unital $R$-algebroids are defined similarly, but we drop the requirement that identity morphisms $1\in \Hom (a,a)_{\mathcal A}$ have to exist.  

A {\em unital algebroid homomorphism} between $R$-algebroids $\mathcal A$ and $\mathcal B$ is simply a functor $\phi \colon {\mathcal A}\rightarrow {\mathcal B}$ that is linear on each morphism set.  Non-unital algebroid homomorphisms are defined similarly, but we drop the condition $\phi (1)=1$ from the definition of a functor.

Given objects, $a$ and $b$ in an $R$-algebroid $\mathcal A$, an object $a\oplus b$ is called a
{\em biproduct} of the objects $a$ and $b$ if it comes equipped with
morphisms $i_a \colon a\rightarrow a\oplus b$, $i_b \colon
b\rightarrow a\oplus b$, $p_a \colon a\oplus b\rightarrow a$, and
$p_b \colon a\oplus b \rightarrow b$ satisfying the equations
$$p_a i_a = 1_a \qquad p_b i_b = 1_b \qquad i_a p_a + i_b p_b = 1_{a\oplus b}$$

An $R$-algebroid $\mathcal A$ is called {\em additive} if every pair of objects has a biproduct, and we have a {\em zero object} $0\in \Ob ({\mathcal A})$ such that $a\oplus 0$ is isomorphic to $a$ for all $a\in \Ob ({\mathcal A})$.

Now, in \cite{Jo2}, Joachim defined the $K$-theory of $C^\ast$-categories, and in particular $K$-theory spectra, by use of a suitable functor from $C^\ast$-categories to $C^\ast$-algebras; $C^\ast$-algebra $K$-theory is of course well-known (see for instance \cite{RLL,W-O} for expositions).  We adapt Joachim's approach here.

Firstly, given an algebroid $\mathcal A$, note that we can define an algebra $S({\mathcal A})$ by writing
\[ S({\mathcal A}) = \oplus_{a,b\in \Ob ({\mathcal A})} \Hom (a,b)_{\mathcal A} .\]

Given $x\in \Hom (a,b)_{\mathcal A}$ and $y\in \Hom (c,d)_{\mathcal A}$, we define the product $xy$ to be the composition $x\circ y \in \Hom (c,b)_{\mathcal A}$ if $d=a$, and $0$ otherwise.

In some sense, the algebra $S({\mathcal A})$ obviously contains the information we want for $K$-theory purposes from the algebroid $\mathcal A$; this idea is made more concrete in \cite{Jo2} by looking at modules (and incidentally, Joachim's $K$-theory for $C^\ast$-categories is the same as that defined in \cite{Mitch2.5}).  However, it has the drawback of not being functorial; as in \cite{Jo2}, the construction needs to be modified.

\begin{definition}
Let $\mathcal A$ be an algebroid.  We define the {\em Joachim algebra} $F({\mathcal A})$ to be the free algebra generated by the morphisms of $\mathcal A$, with the relations
\[ r (x) + s (y) = (r x+s y) \qquad r,s \in R, x,y\in \Hom (a,b)_{\mathcal A}  , \]
and
\[ (x)(y) = (xy) \qquad x\in \Hom( b,c)_{\mathcal A} ,\ y\in \Hom (a,b)_{\mathcal A} .\]

Here, we write $(x)$ to denote the image in $F({\mathcal A})$ of a morphism, $x$, in the algebroid $\mathcal A$.
\end{definition}

Given an algebroid homomorphism $\phi \colon {\mathcal A}\rightarrow {\mathcal B}$, we have an induced algebra homomorphism $\phi_\ast \colon F({\mathcal A}) \rightarrow F({\mathcal B})$ defined by the obvious formula $\phi_\ast ((x)) = (\phi (x))$ for each morphism $x$ in $\mathcal A$.  With such induced morphisms, $F$ is a a functor.  Further, we have a natural algebroid homomorphism $\tau \colon {\mathcal A}\rightarrow F({\mathcal A})$ defined by writing $\tau (x) =(x)$ for any morphism $x$.

Further, we have a {\em universal property}, namely, given an algebroid homomorphism $\phi \colon {\mathcal A}\rightarrow B$ into an algebra $B$, there is a unique algebra homomorphism $\theta \colon F({\mathcal A})\rightarrow B$ such that $\phi = \theta \circ \tau$.

In particular, there is an obvious homomorphism $\eta \colon {\mathcal A}\rightarrow S({\mathcal A})$, and so a unique algebra homomorphism $\sigma \colon F({\mathcal A}) \rightarrow S({\mathcal A})$ such that $\eta = \sigma \circ \tau$.

Just as in \cite{Jo2}, we have the following result.

\begin{proposition} \label{FS}
The map $\sigma \colon F({\mathcal A})\rightarrow S({\mathcal A})$ is a stable algebraic homotopy equivalence.
\noproof
\end{proposition}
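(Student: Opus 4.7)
My plan is to adapt the argument of [Jo2] from $C^\ast$-categories to the purely algebraic setting. The starting point is to identify precisely how $F(\mathcal{A})$ and $S(\mathcal{A})$ differ: the map $\sigma$ is surjective by construction, and its kernel is generated by the formal products $(x)(y)$ in $F(\mathcal{A})$ whose factors $x \in \Hom(b,c)_{\mathcal{A}}$ and $y \in \Hom(a,b')_{\mathcal{A}}$ are not composable in $\mathcal{A}$, since in $S(\mathcal{A})$ all such products vanish.

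The key observation is that tensoring with $M_\infty(R)$ gives enough matrix indices to combinatorially encode the object-indexing of $\mathcal{A}$. After reducing to the case of a countable $\mathcal{A}$ via filtered colimits over the countable sub-algebroids (using that $F$, $S$, and $\KK$ all commute with such colimits up to weak equivalence), I would fix an injection $\iota \colon \Ob(\mathcal{A}) \hookrightarrow \mathbb{N}$ and a bijection $\psi \colon \mathbb{N} \times \mathbb{N} \to \mathbb{N}$, and define
\[ \beta \colon S(\mathcal{A}) \otimes_R M_\infty(R) \rightarrow F(\mathcal{A}) \otimes_R M_\infty(R) \]
on generators by $\beta(x \otimes e_{ij}) = (x) \otimes e_{\psi(\iota(a),i),\,\psi(\iota(b),j)}$ for $x \in \Hom(a,b)_{\mathcal{A}}$. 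The crucial point that makes $\beta$ an algebra homomorphism is that matrix-unit multiplication automatically kills the non-composable pairs of morphisms, exactly matching the product rule in $S(\mathcal{A})$; composable pairs, on the other hand, land in adjacent matrix units and produce precisely $(xy)$ tensored with the appropriate unit.

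The remaining task is to check that $\beta$ and $\sigma \otimes \id_{M_\infty(R)}$ are mutually inverse up to algebraic homotopy after further matrix stabilization. Both compositions reduce, after identifying $M_\infty(R) \otimes_R M_\infty(R) \cong M_\infty(R)$, to self-embeddings of $S(\mathcal{A}) \otimes M_\infty(R)$ or $F(\mathcal{A}) \otimes M_\infty(R)$ by a permutation of matrix units. Such a permutation is conjugation by an element of $GL_\infty(R)$ which is algebraically homotopic to the identity via the standard trick: a permutation matrix is a product of elementary matrices $E_{ij}(1)$, and each $E_{ij}(1)$ is linked to the identity by the polynomial path $t \mapsto E_{ij}(t)$ living in $GL_\infty(R[t])$. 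I expect the main obstacle to be keeping the combinatorial bookkeeping consistent across the two compositions and verifying that the reduction to the countable case is compatible with the definition of stable algebraic homotopy equivalence; the homotopy-theoretic content itself is the elementary-matrix argument.
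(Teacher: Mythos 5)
Your construction of $\beta$ out of object-indexed matrix units is the right idea (it is essentially Joachim's), but there are two problems. The first is a bookkeeping error that actually matters: with the paper's convention that composition is a map $\Hom (b,c)_{\mathcal A}\times \Hom (a,b)_{\mathcal A}\rightarrow \Hom (a,c)_{\mathcal A}$, your formula $\beta (x\otimes e_{ij}) = (x)\otimes e_{\psi (\iota (a),i),\psi (\iota (b),j)}$ for $x\in \Hom (a,b)_{\mathcal A}$ is not multiplicative. For a composable pair $u\in \Hom (b,c)_{\mathcal A}$, $v\in \Hom (a,b)_{\mathcal A}$ the matrix units you assign multiply to $\delta_{\psi (\iota (c),j),\psi (\iota (a),k)}(\cdots )$, so the image of a composable product is killed unless $a=c$, while it is exactly the non-composable pairs in the reverse order that survive. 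You need the \emph{target} object in the row index and the \emph{source} in the column index, $\beta (x\otimes e_{ij}) = (x)\otimes e_{\psi (\iota (b),i),\psi (\iota (a),j)}$; with that fix $\beta$ is a homomorphism and the matrix units annihilate precisely the non-composable products, as you intend.

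The second problem is the essential gap. The composite $\beta \circ (\sigma \otimes \id )$ on $F({\mathcal A})\otimes_R M_\infty (R)$ factors through $\sigma \otimes \id$, so it annihilates $\ker \sigma \otimes M_\infty (R)$ --- the ideal generated by the non-composable products $(x)(y)$, which is nonzero for any algebroid with more than one object and nonzero morphisms between them. A non-injective endomorphism cannot be conjugation by a permutation (or by any element of $GL_\infty$), so your claim that ``both compositions reduce to self-embeddings by a permutation of matrix units'' fails on the $F({\mathcal A})$ side, and the elementary-matrix homotopy as you describe it does not apply there. This is exactly where the content of the proposition sits: one must exhibit an algebraic homotopy from the corner embedding of $F({\mathcal A})$ to a homomorphism that kills $\ker \sigma$. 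The repair is to build the homotopy on generators via the universal property of $F({\mathcal A})$: choose for each object $a$ an elementary-matrix path $u_a(t)\in GL_\infty (R[t])$ from the identity to a signed permutation carrying $1$ to $\iota (a)$, and set $H((x)) = (x)\otimes u_b(t)\, e_{11}\, u_a(t)^{-1}$ for $x\in \Hom (a,b)_{\mathcal A}$. This assignment respects the defining relations of $F({\mathcal A})$ (only composable products are constrained, and those work out since $u_a(t)^{-1}u_a(t)=1$), so it extends to an algebra homomorphism; at $t=0$ it is the corner embedding, and at $t=1$ the matrix units attached to any non-composable pair multiply to zero, so $H_1$ factors as $\beta \circ \sigma$. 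Your elementary-matrix argument is then only needed for the genuinely easy composite on the $S({\mathcal A})$ side, where non-composable products are already zero in the first tensor factor. (The reduction to countable object sets and the compatibility of $M_\infty (R)\otimes_R M_\infty (R)\cong M_\infty (R)$ with the paper's definition of stable algebraic homotopy equivalence are further loose ends, but the missing homotopy on the $F({\mathcal A})$ side is the real gap.)
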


\begin{definition}
Let $\mathcal A$ and $\mathcal B$ be $R$-algebroids.  Then we define the {\em bivariant algebraic $K$-theory spectrum}
\[ \KK ({\mathcal A},{\mathcal B}) = \KH (F({\mathcal A}), F({\mathcal B})) . \]
\end{definition}

Let $\mathcal A$, $\mathcal B$ and $\mathcal C$ be $R$-algebroids with the same set of objects.  We call a sequence of algebroid homomorphisms
\[ 0 \rightarrow {\mathcal A}\stackrel{i}{\rightarrow} {\mathcal B} \stackrel{i}{\rightarrow} {\mathcal C} \rightarrow 0 \]
a {\em split short exact sequence} if $i$ and $j$ are the identity map on the set of objects, there is an algebroid homomorphism $k \colon {\mathcal C}\rightarrow {\mathcal B}$ such that $j \circ k = \id_{\mathcal C}$, and for all objects $a$ and $b$ we have a split exact sequence of abelian groups
\[ 0 \rightarrow \Hom (a,b)_{\mathcal A}\stackrel{i}{\rightarrow} \Hom (a,b)_{\mathcal B} \stackrel{j}{\rightarrow} \Hom (a,b)_{\mathcal C} \rightarrow 0 \]
with splitting given by $k$.

It is easy to check that the functor $F$ takes split short exact sequences of algebroids to split short exact sequences of algebras.  We therefore have the following result.

\begin{proposition} \label{propertyB}
Let 
\[ 0 \rightarrow {\mathcal A}\stackrel{i}{\rightarrow} {\mathcal B}\stackrel{j}{\rightarrow} {\mathcal C}\rightarrow 0 \]
be a split exact short exact sequence of $R$-algebroids.   Let $D$ be a fixed $R$-algebra.  Then we have induced homotopy fibrations of spectra
\[ \KK (D,{\mathcal A})\stackrel{i_\ast}{\rightarrow} \KK (D,{\mathcal B})\stackrel{j_\ast}{\rightarrow} \KK (D,{\mathcal C}) \]
and
\[ \KK ({\mathcal C},D)\stackrel{j^\ast}{\rightarrow} \KK ({\mathcal B},D)\stackrel{i^\ast}{\rightarrow} \KK ({\mathcal A},D)  . \]
\noproof
\end{proposition}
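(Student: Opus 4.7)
The plan is to reduce the statement to the algebra-level analogue stated in the preceding theorem, using the Joachim-algebra functor $F$ as a translation device.

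The first step is to verify the claim, already asserted in the paragraph preceding the proposition, that $F$ carries split short exact sequences of algebroids to split short exact sequences of algebras.  Functoriality of $F$ immediately provides the algebra splitting $k_\ast$ of $j_\ast$. For exactness, I would use the presentation of $F({\mathcal B})$ by the generators $(x)$ for morphisms $x$ of ${\mathcal B}$, modulo the $R$-linearity and multiplicativity relations in the definition of $F$.  The splitting $k$ induces a canonical direct sum decomposition of each $\Hom(a,b)_{\mathcal B}$ into the image of $i$ and the image of $k$, and this decomposition passes through the generators-and-relations presentation to give the corresponding decomposition at the level of algebras.

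Once this is in hand, applying $F$ to the given algebroid sequence produces a split short exact sequence
\[ 0\rightarrow F({\mathcal A})\stackrel{i_\ast}{\rightarrow} F({\mathcal B})\stackrel{j_\ast}{\rightarrow} F({\mathcal C})\rightarrow 0 \]
of $R$-algebras.  Invoking the second bulleted property of the preceding theorem for algebra-level bivariant $K$-theory, with the third fixed algebra taken to be $D$, yields the homotopy fibrations
\[ \KK (D, F({\mathcal A}))\rightarrow \KK (D, F({\mathcal B}))\rightarrow \KK (D, F({\mathcal C})) \]
and
\[ \KK (F({\mathcal C}), D)\rightarrow \KK (F({\mathcal B}), D)\rightarrow \KK (F({\mathcal A}), D) . \]
Regarding $D$ as a one-object algebroid, the defining relations in $F$ make the tautological map $F(D)\rightarrow D$ an isomorphism, so by the definition $\KK({\mathcal X},{\mathcal Y}) = \KK(F({\mathcal X}), F({\mathcal Y}))$ these are precisely the homotopy fibrations asserted in the statement.

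The only real technical content lies in verifying the compatibility of $F$ with split exactness; this is the step I expect to be the main obstacle, though the text describes it as routine.  Everything else is a direct transcription of the algebra-level result through the functor $F$, together with the stable homotopy equivalence of proposition \ref{FS} being unnecessary here since we are passing through $F({\mathcal A})$ rather than $S({\mathcal A})$ on the nose.
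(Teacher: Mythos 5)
Your proposal matches the paper's own (very brief) argument: the paper simply observes that $F$ takes split short exact sequences of algebroids to split short exact sequences of algebras and then cites the second bullet of the algebra-level theorem, which is exactly your reduction. Your extra care in checking the generators-and-relations compatibility and the identification of $\KK(D,-)$ with $\KK(F(D),-)$ only fills in details the paper declares routine, so this is essentially the same proof.
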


We call an algebroid homomorphism $\phi \colon {\mathcal A}\rightarrow {\mathcal B}$ an {\em algeboid equivalence} if there is an algebroid homomorphism $\psi \colon {\mathcal B}\rightarrow {\mathcal A}$, along with natural isomorphisms $X \colon \psi \circ \phi \rightarrow \id_{\mathcal A}$ and $Y\colon \phi \circ \psi \rightarrow \id_{\mathcal B}$

\begin{proposition} \label{propertyA}
Let $\phi \colon {\mathcal A}\rightarrow {\mathcal B}$ be an algebroid equivalence.  Let $D$ be an $R$-algebra.  Then the induced map $\phi_\ast \colon \KK (D,{\mathcal A})\rightarrow \KK (D,{\mathcal B})$ is a weak equivalence.
\end{proposition}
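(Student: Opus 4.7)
The plan is to reduce the statement to showing that $\phi$ induces a stable algebraic homotopy equivalence of algebras $F({\mathcal A}) \to F({\mathcal B})$, after which the matrix-stability and homotopy-invariance properties of $\KK(D,-)$ from the bivariant $K$-theory theorem immediately give the result. By Proposition \ref{FS} the Joachim-comparison maps $\sigma_{\mathcal A}$ and $\sigma_{\mathcal B}$ are stable algebraic homotopy equivalences; the universal property of $F$, applied to the composite ${\mathcal A} \to {\mathcal B} \to S({\mathcal B})$, yields the naturality identity $\sigma_{\mathcal B}\circ F(\phi) = S(\phi)\circ \sigma_{\mathcal A}$. Two-out-of-three then reduces the claim to showing that $S(\phi)\colon S({\mathcal A}) \to S({\mathcal B})$ is itself a stable algebraic homotopy equivalence.

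For this I would take $S(\psi)$ as the candidate inverse, where $\psi$ is the algebroid inverse equivalence, and show that each composition, say $S(\psi\phi)$, is algebraically homotopic to the identity after tensoring with $M_\infty(R)$ (the other composition is symmetric, using $Y$ in place of $X$). Each component $X_a$ of the natural isomorphism $X\colon \psi\phi \to \id_{\mathcal A}$ lies in $\Hom((\psi\phi)(a),a)_{\mathcal A} \subseteq S({\mathcal A})$ with inverse $X_a^{-1}$ also in $S({\mathcal A})$, and naturality of $X$ forces the algebra identity $(\psi\phi)(f) = X_b^{-1} \cdot f \cdot X_a$ inside $S({\mathcal A})$ for every morphism $f\colon a \to b$ in $\mathcal A$. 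Thus $S(\psi\phi)$ is implemented by formal conjugation using the collection $\{X_a\}$.

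The final step would then be to promote this formal conjugation to a genuine stable algebraic homotopy by the classical Whitehead rotation trick: any element of $S({\mathcal A})$ is supported on a finite subset $F \subseteq \Ob({\mathcal A})$, so on the corresponding corner the restricted natural isomorphism assembles into a unit $U_F$ (with inverse built from the $X_a^{-1}$); embedding into $M_2$, the element $\mathrm{diag}(U_F, U_F^{-1})$ factors as a product of elementary matrices, giving a polynomial, hence algebraic, homotopy from conjugation by this element to the identity inside $M_2(S({\mathcal A}))$. These local homotopies assemble into an algebraic homotopy between $S(\psi\phi)\otimes \id_{M_\infty(R)}$ and the identity on $S({\mathcal A}) \otimes M_\infty(R)$. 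The hard part is precisely this last step: the potentially infinite $\Ob({\mathcal A})$ prevents a single global inverse element, but since elements of $S({\mathcal A})$ are finitely supported by the direct-sum definition, the Whitehead rotation can be carried out locally on each element and then patched into the required global algebraic homotopy.
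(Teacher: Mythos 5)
Your overall route is the same as the paper's: the printed proof also reduces via Proposition \ref{FS}, asserting that $S({\mathcal A})$ and $S({\mathcal B})$ are stably algebraically homotopy equivalent, deducing that $F(\phi)$ is a stable algebraic homotopy equivalence, and finishing by stable homotopy invariance of $\KK(D,-)$; your naturality square $\sigma_{\mathcal B}\circ F(\phi)=S(\phi)\circ\sigma_{\mathcal A}$ and two-out-of-three is just a more explicit version of that reduction. The difference is that the paper dismisses the $S$-level statement with ``it is clear,'' whereas you attempt to prove it, and your attempt is exactly where the one genuine gap sits. An elementary algebraic homotopy is a \emph{single} algebra homomorphism $H\colon A\rightarrow B[x]$, so a family of Whitehead rotations defined element-by-element, with the unit $U_F$ and the intermediate stages $e_{12}(tU_F)$, etc., depending on the finite support $F$ of the element, does not automatically assemble: you must check that $H(fg)=H(f)H(g)$ when $f$ and $g$ have different supports, and the intermediate conjugates genuinely depend on $F$ (for instance $U_F\cdot f$ acquires a term $X_{a'}\circ f$ for every $a'\in F$ with $(\psi\phi)(a')$ equal to the target of $f$, so enlarging $F$ changes the value at interior times even though the endpoints agree). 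To close this, either realise the conjugation as conjugation by a single multiplier of $S({\mathcal A})\otimes_R M_\infty(R)$ and invoke the standard fact that such conjugations are algebraically homotopic to the identity in a matrix-stable setting, or write down one global homomorphism into the polynomial ring and verify multiplicativity directly. Since the paper offers no argument at all for this step, your proposal is at least as detailed as the printed proof, but as written the final patching is a plan rather than a proof.
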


\begin{proof}
It is clear that the algebras $S({\mathcal A})$ and $S({\mathcal B})$ are stably algebraically homotopy equivalent.  It follows by proposition \ref{FS} that the map $\phi_\ast \colon F({\mathcal A})\rightarrow F({\mathcal B})$ is a stable homotopy equivalence.  The result now follows by stable homotopy invariance of bivariant algebraic $K$-theory.
\end{proof}

This innocuous-seeming result is vital to us when we look at assembly maps, and, incidentally, is the point where we need the matrix stability of bivariant algebraic $K$-theory.

\section{Equivariant Assembly}

Let $G$ be a discrete group.  A coarse space equipped with a right $G$-action is termed a {\em coarse $G$-space}.

We call a subset, $A$, of a coarse $G$-space $X$ {\em cobounded} if there is a bounded subset $B\subseteq X$ such that $A\subseteq BG$.  The {\em coarse $G$-category} is the category where the objects are coarse $G$-spaces, and the morphisms are controlled equivariant maps where the inverse image of a cobounded set is cobounded.

As in \cite{Mitch13}, the notions of coarse homotopy and flasqueness generalise in an obvious way to the equivariant notions of {\em coarse $G$-homotopy} and {\em $G$-flasqueness}.

\begin{definition}
We call a functor $E_G$ from the coarse $G$-category to the category
of spectra {\em coarsely $G$-excisive} if the following conditions
hold.

\begin{itemize}

\item The spectrum $E_G(X)$ is weakly contractible whenever the coarse
space $X$ is $G$-flasque.

\item The functor $E_G$ takes coarse $G$-homotopy equivalences to weak homotopy
equivalences of spectra.

\item Given a coarsely excisive decomposition $X=A\cup B$, where $A$ and $B$
are coarse $G$-spaces, we have a homotopy push-out diagram
\[ \begin{array}{ccc}
E_G(A\cap B) & \rightarrow & E_G(A) \\
\downarrow & & \downarrow \\
E_G(B) & \rightarrow & E_G(X) \\
\end{array} . \]

\item Let $X$ be a cobounded coarse $G$-space.  Then the constant map $c\colon X\rightarrow +$ induces a stable
equivalence $c_\ast \colon E_G(X)\rightarrow E_G(+)$.

\end{itemize}

\end{definition}

As in the non-equivariant case, if $E_G$ is coarsely $G$-excisive, we have a weak fibration
\[ E_G(X) \stackrel{j}{\rightarrow} E_G({\mathcal C}X) \stackrel{v_G}{\rightarrow} E_G({\mathcal O}X) \]
and associated boundary map
\[ \partial_G \colon \Omega E_G({\mathcal O}X ) \rightarrow E_G(X) .  \]

This map is termed the {\em equivariant assembly map} associated to the functor $E_G$.

\begin{definition}
Let $E$ be a coarsely excisive functor.  Then we say a coarsely $G$-excisive functor $E_G$ has the {\em local property} relative to $E$ if there is a natural
map $i \colon E_G(X)\rightarrow E(X)$, such that if $X={\mathcal O}Y$, where $Y$ is a free coarse cocompact $G$-space, and $\pi \colon X\rightarrow X/G$ is the quotient map, then the composite
\[ \pi_\ast \circ i = i \circ \pi_\ast \colon E_G (X)\rightarrow E(X/G) \]
is a stable equivalence.
\end{definition}

In its most general terms, the main result of \cite{Mitch13}, which we apply to homotopy algebraic $K$-theory in the next section, is the following.  We call it the {\em descent theorem}.

\begin{theorem} \label{DT}
Let $E_G$ be a coarsely $G$-excisive functor. Let $E$ be a coarsely excisive functor with the local property relative to $E_G$.   Let $X$ be a free coarse $G$-space, that is, as a topological space, $G$-homotopy equivalent to a finite $G$-$CW$-complex.

Suppose the coarse assembly map for the functor $E$ and space $X$ is a weak equivalence.  Then the map $\partial_G \colon \Omega E_G ({\mathcal O}X) \rightarrow E_G (X)$ is injective at the level of stable homotopy groups.
\noproof
\end{theorem}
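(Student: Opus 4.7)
My plan is a diagram chase that compares the equivariant weak fibration from (the equivariant version of) Lemma~\ref{M13} with the non-equivariant one, mediated by the natural transformation $i$ and the quotient $\pi \colon X \rightarrow X/G$. The key point is that the local property allows us to descend to the orbit space, where the hypothesis on the non-equivariant assembly map kicks in and reflects back to the equivariant level.

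Concretely, I would first assemble the commutative diagram
\[ \begin{array}{ccc}
\Omega E_G({\mathcal O}X) & \stackrel{\partial_G}{\rightarrow} & E_G(X) \\
\downarrow i & & \downarrow i \\
\Omega E({\mathcal O}X) & \stackrel{\partial}{\rightarrow} & E(X) \\
\downarrow \pi_\ast & & \downarrow \pi_\ast \\
\Omega E({\mathcal O}(X/G)) & \stackrel{\partial}{\rightarrow} & E(X/G) \\
\end{array} \]
whose squares commute by naturality of the weak fibration of Lemma~\ref{M13} in both the coarse space (for $\pi_\ast$) and the coarsely excisive functor (for $i$). Since $X$ is free and cocompact (the latter because $X$ is $G$-homotopy equivalent to a finite $G$-$CW$-complex, whose quotient is a finite $CW$-complex), the local property, applied with $Y = X$, tells us that the composite down the left column, $\pi_\ast \circ i \colon \Omega E_G({\mathcal O}X) \rightarrow \Omega E({\mathcal O}(X/G))$, is a stable equivalence. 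The middle row $\partial$ is a weak equivalence by the standing hypothesis of the theorem.

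The proof then reduces to a short chase. Let $\alpha \in \pi_n \Omega E_G({\mathcal O}X)$ with $\partial_G(\alpha) = 0$. Commutativity of the top square gives $\partial (i_\ast \alpha) = i_\ast \partial_G(\alpha) = 0$; since $\partial$ in the middle row is a weak equivalence this forces $i_\ast \alpha = 0$; applying $\pi_\ast$ yields $\pi_\ast i_\ast \alpha = 0$; and the stable equivalence of the left column composite then forces $\alpha = 0$. Hence $\partial_G$ is injective on all stable homotopy groups.

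The main obstacle, I expect, is spectrum-level bookkeeping rather than anything conceptual: one must verify that $i$ and $\pi_\ast$ genuinely intertwine the connecting maps $\partial_G$ and $\partial$ coming from Lemma~\ref{M13}, i.e.~that they fit into maps of weak fibration sequences, and that the $G$-homotopy equivalence of $X$ with a finite $G$-$CW$-complex really supplies the strict cocompactness needed to apply the local property to $Y = X$. Once these naturality checks are in place, the homotopy-group chase above is essentially mechanical.
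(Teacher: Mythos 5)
The paper states this theorem without proof (it is imported from \cite{Mitch13}), so I can only measure your argument against the descent machinery of that source. There the otherwise-unused hypothesis that $X$ is $G$-homotopy equivalent to a \emph{finite} $G$-$CW$-complex is the engine of the proof: one compares $\partial_G$ with the induced map on homotopy fixed points $F(EG_+,E({\mathcal O}X))^G \rightarrow F(EG_+,E(X))^G$, and the comparison $E_G(-)\rightarrow F(EG_+,E(-))^G$ is controlled by induction over the cells, which is exactly where finiteness is needed to commute the homotopy limit past the cell filtration. Your route short-circuits all of this by applying the local property in one shot to $Y=X$, and on a literal reading of that property your chase (top square commutes, middle $\partial$ injective, left-hand composite an equivalence, hence $i$ injective on stable homotopy) is logically sound and genuinely more direct. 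The fact that you never use finiteness, and only use injectivity rather than bijectivity of the non-equivariant assembly map, should however have made you suspicious that you are leaning on a stronger reading of the hypotheses than intended.

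Two of the points you defer are genuine gaps rather than bookkeeping. First, your derivation of cocompactness is wrong: being $G$-homotopy equivalent to a finite $G$-$CW$-complex does not make $X$ cobounded (take $X=\R^2$ with $\Z$ translating one coordinate; it is $\Z$-homotopy equivalent to $\R$ but the quotient is not compact). Coboundedness of $X$ must be assumed, not derived, before the local property can be invoked with $Y=X$. Second, and more seriously, your chase requires the local-property equivalence to factor through the spectrum $E({\mathcal O}X)$, i.e.\ it requires $\pi_\ast\colon E({\mathcal O}X)\rightarrow E({\mathcal O}X/G)$ to exist as a map of spectra. But $\pi$ is not a morphism of the coarse category when $G$ is infinite --- the preimage of a bounded set is only cobounded --- so $E(\pi)$ is not supplied by the functoriality of an abstract coarsely excisive $E$; it exists for the concrete geometric-module functors, but in general the expression $\pi_\ast\circ i$ in the definition of the local property has to be read as a single given map. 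Without the factorization through $E({\mathcal O}X)$ you cannot conclude that $i\colon E_G({\mathcal O}X)\rightarrow E({\mathcal O}X)$ is injective on stable homotopy groups, and the whole chase collapses; sidestepping exactly this is one of the reasons the standard descent argument passes through homotopy fixed points. If you add the existence of $\pi_\ast$ on $E$ (with the stated compatibility) and coboundedness of $X$ to your standing assumptions, your argument goes through.
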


\section{Homology}

Our plan in this section is to define a coarsely excisive functor with 'coefficients' in algebraic $KK$-theory.  As in \cite{Mitch13}, we adapt the approach taken for controlled algebraic $K$-theory in such articles as \cite{ACFP,Bart,CP,Weiss}.

\begin{definition}
Let $X$ be a coarse space, and let $\mathcal A$ be an additive $R$-algebroid.  Let $B(X)$ be the collection of bounded subsets of $X$, partially ordered by inclusion.  Note that a partially ordered set can be regarded as a category; a {\em geometric $\mathcal A$-module} over $X$ is a functor, $M$, from the collection of bounded subsets of $X$\footnote{Regarded as a category by looking at the usual partial ordering.} to the category $\mathcal A$ such that for any bounded set $B$ the natural map
\[ \oplus_{x\in B} M(\{ x \}) \rightarrow M(B) \]
induced by the various inclusions is an isomorphism, and the {\em support}
$$\supp (M) = \{ x\in X |\ M(\{ x\}) \neq 0 \}$$
has finite intersection with every bounded subset of $X$.
\end{definition}

We call a subset $S\subseteq X$ {\em locally finite} if $S\cap B$ is finite whenever $B$ is bounded.  Thus the second of the above conditions says simply that $\supp (M)$ is locally finite.

A {\em morphism} $\phi \colon M\rightarrow N$ between geometric $\mathcal A$-modules over $X$ is a collection of
morphisms $\phi_{x,y} \colon M_y \rightarrow N_x$ in the algebroid
$\mathcal A$ such that for each fixed point $x\in X$, the morphism $\phi_{x,y}$ is non-zero for only finitely many points
$y\in X$, and for each fixed point $y\in X$, the morphism $\phi_{x,y}$ is non-zero for only finitely many points $x\in X$.

Composition of morphisms $\phi \colon M\rightarrow N$ and $\psi \colon N\rightarrow P$ is defined by the formula
\[ (\psi \circ \phi )_{x,y} (\eta ) = \sum_{z\in X} \psi_{x,z} \circ \phi_{z,y} (\eta ) . \]

We define the {\em support} of a morphism $\phi$
\[ \supp (\phi ) = \{ (x,y)\in X \ |\ \phi_{x,y}\neq 0 \} . \]

\begin{definition}
The category ${\mathcal A}[X]$ consists of all geometric
$\mathcal A$-modules over $X$ and morphisms such that the support is
controlled with respect to the coarse structure of $X$.
\end{definition}

Observe that ${\mathcal A}[X]$ is again an additive $R$-algebroid.  Given a coarse map $f\colon X\rightarrow Y$, and a geometric $\mathcal A$-module, $M$, over $X$, we have a geometric $A$-module $f_\ast [M]$ defined by writing $f_\ast [M](B) = M(f^{-1}[B])$ for each bounded set $B\subseteq Y$.  Given a morphism $\phi \colon M\rightarrow N$ be a morphism in the category ${\mathcal A}[X]$, we have an induced morphism $f_\ast [\phi ] \colon f_\ast [M]\rightarrow f_\ast [N]$ given by the formula
\[ f_\ast [\phi ]_{y_1,y_2} = \sum_{x_1\in f^{-1}(y_1) \atop x_2\in f^{-1}(y_2)} \phi_{x_1,x_2} .\]

The above turns the assignment $X\mapsto {\mathcal A}[X]$ into a functor from the coarse category to the category of small $R$-algebroids and algebroid homomorphisms.

Now, the following is proved in exactly the same way as theorem 6.13 from \cite{Mitch13}.  Proposition \ref{propertyA} is needed to establish coarse homotopy invariance, and proposition \ref{propertyB} is needed for homotopy push-outs.

\begin{theorem} \label{Ahom}
Let $D$ be an $R$-algebra, and let $\mathcal A$ be an additive $R$-algebroid.  Then the functor $X\mapsto \KK (D,{\mathcal A}[X])$ is coarsely excisive.
\noproof
\end{theorem}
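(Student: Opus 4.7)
The plan is to verify the three defining conditions of a coarsely excisive functor in turn, closely following the template of theorem 6.13 of \cite{Mitch13}. Proposition \ref{propertyA} will handle invariance of $\KK (D,-)$ under algebroid equivalence, and proposition \ref{propertyB} will supply the homotopy fibrations of $\KK (D,-)$-spectra associated to split short exact sequences of algebroids.

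For the flasqueness condition, suppose $X$ is flasque via $\tau \colon X \to X$. I would define an Eilenberg swindle endofunctor $\Sigma \colon {\mathcal A}[X] \to {\mathcal A}[X]$ by $\Sigma (M) = \bigoplus_{n \geq 0} (\tau^n)_\ast M$. The first flasqueness axiom ensures that $\Sigma (M)$ has locally finite support on any bounded set (since only finitely many iterates $\tau^n[X]$ meet a given bounded set), the second keeps the supports of induced morphisms controlled, and the third, combined with closeness of $\tau$ to $\id$ and the shift map, furnishes a natural isomorphism of functors $\Sigma \cong \id \oplus \Sigma$. Proposition \ref{propertyA} then gives $\Sigma_\ast \simeq \id + \Sigma_\ast$ on $\KK (D, {\mathcal A}[X])$, forcing $\id \simeq 0$ and hence weak contractibility.

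For coarse homotopy invariance, I would first observe that close maps $f,g \colon X \to Y$ induce functors $f_\ast, g_\ast \colon {\mathcal A}[X] \to {\mathcal A}[Y]$ that are naturally isomorphic through the controlled set $\{(f(x),g(x))\}$, so proposition \ref{propertyA} forces them to agree on $\KK (D,-)$. One then upgrades to the full cylinder case by showing $(i_0)_\ast = (i_1)_\ast$ on $\KK (D, {\mathcal A}[I_p X])$; composing with a coarse homotopy $H$ proves $(f_0)_\ast = (f_1)_\ast$ for arbitrary coarsely homotopic $f_0, f_1$, and thus coarse homotopy equivalences become weak equivalences. For the homotopy push-out axiom, given a coarsely excisive decomposition $X = A \cup B$, I would identify ${\mathcal A}[A\cap B]$, ${\mathcal A}[A]$ and ${\mathcal A}[B]$ as full subalgebroids of ${\mathcal A}[X]$ spanned by modules with support in the respective subset. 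The excisiveness condition $m(A) \cap m(B) \subseteq M(A \cap B)$ is then exactly what is needed to assemble the Mayer--Vietoris square into a split short exact sequence of algebroids in the sense preceding proposition \ref{propertyB}, which that proposition converts into the desired homotopy push-out of $\KK (D,-)$-spectra.

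The main technical obstacle I anticipate is the cylinder step of coarse homotopy invariance. While the close-maps case is dispatched immediately by proposition \ref{propertyA}, proving $(i_0)_\ast = (i_1)_\ast$ on $\KK (D, {\mathcal A}[I_p X])$ requires a genuine argument internal to $I_p X$, most likely an auxiliary swindle along the $R$-direction or a splitting at the level of geometric modules. One must also verify throughout that the stable algebraic homotopy equivalences supplied by proposition \ref{FS} interact correctly with the Joachim algebra construction, so that equivalences visible at the algebroid level descend to the spectrum level of $\KK (D,-)$ as required.
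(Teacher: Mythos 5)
Your proposal takes essentially the same route as the paper, which gives no written-out argument but states that the theorem ``is proved in exactly the same way as theorem 6.13 from \cite{Mitch13},'' with Proposition \ref{propertyA} used for coarse homotopy invariance and Proposition \ref{propertyB} for the homotopy push-outs --- exactly the roles you assign to those two propositions. Your Eilenberg-swindle treatment of flasqueness and your identification of the cylinder step as the main remaining technical content are consistent with that template, so there is nothing to add beyond what the cited theorem already supplies.
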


We also have an equivariant version of the construction.  To be more precise, let  $X$ be a coarse $G$-space, let $R$ be a ring, and let $\mathcal A$ be an additive $R$-algebroid.  Then we call a geometric $\mathcal A$-module, $M$, over $X$ {\em $G$-invariant} if $M_{xg} = M_x$ for all $x\in X$ and $g\in G$. A morphism $\phi \colon M\rightarrow N$ between such modules is termed {\em $G$-invariant} if $\phi_{xg,yg} = \phi_{x,y}$ for all $x,y\in X$.

\begin{definition}
We write ${\mathcal A}_G [X]$ to denote the category of $G$-invariant geometric $\mathcal A$-modules over $X$, and $G$-invariant morphisms.
\end{definition}

The following result is similar to theorem \ref{Ahom}.

\begin{theorem}
The assignment $X\mapsto \KK( D,{\mathcal A}_G [X])$ is a coarsely $G$-excisive functor.  Further, the functor $X\mapsto \KK (D,{\mathcal A}_G[X])$ has the local property relative to the functor $\KK (D,{\mathcal A} [X])$.
\noproof
\end{theorem}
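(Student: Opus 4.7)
The plan is to mirror the proof of Theorem \ref{Ahom}, threading $G$-invariance through each construction, and then to handle the two features special to the equivariant setting: the cobounded axiom and the local property. Much of the work reduces to the observation that every split short exact sequence and every algebroid equivalence that drives the non-equivariant proof restricts cleanly to its $G$-invariant subcategory.

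For the first three axioms of coarse $G$-excision, I would re-run the arguments of Theorem \ref{Ahom} with $G$-invariance in place at each step. A $G$-flasque structure map $\tau\colon X \to X$ induces an equivariant Eilenberg swindle on ${\mathcal A}_G[X]$, giving contractibility of the spectrum by Proposition \ref{propertyA}. A coarse $G$-homotopy $H\colon I_p X \to Y$ yields $G$-equivariant algebroid morphisms at the endpoints, and the usual algebroid equivalences connecting them are $G$-invariant; Proposition \ref{propertyA} again gives coarse $G$-homotopy invariance. A coarsely excisive decomposition $X=A\cup B$ of coarse $G$-spaces produces a split short exact sequence of algebroids whose kernel, middle, and quotient all admit compatible $G$-actions, so restriction to $G$-invariants preserves split-exactness, and Proposition \ref{propertyB} delivers the required homotopy pushout.

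For the cobounded axiom, write $X = BG$ with $B$ bounded, and consider the constant equivariant map $c\colon X \to +$. Local finiteness of support together with $G$-invariance force $M(\{x\})=0$ outside finitely many $G$-orbits in $X$, so an object $M$ of ${\mathcal A}_G[X]$ is specified by finitely many objects of $\mathcal{A}$ indexed by orbit representatives lying in $B$. Choosing such representatives and assembling morphisms via their biproducts gives an algebroid equivalence ${\mathcal A}_G[X] \simeq {\mathcal A}_G[+]$ intertwining $c_\ast$ with the identity, whence Proposition \ref{propertyA} converts this into the desired stable equivalence of spectra. For the local property, define $i\colon {\mathcal A}_G[X] \to {\mathcal A}[X]$ as the evident inclusion of $G$-invariant modules as a full subcategory. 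When $X = {\mathcal O}Y$ for a free cocompact $G$-space $Y$, freeness lets one identify $G$-invariant geometric $\mathcal{A}$-modules over $X$ with ordinary geometric $\mathcal{A}$-modules over $X/G$: push down along $\pi$, and pull back using an equivariant orbit section. This pair is an algebroid equivalence ${\mathcal A}_G[X] \simeq {\mathcal A}[X/G]$, and Proposition \ref{propertyA} turns it into a stable equivalence realising $\pi_\ast \circ i$.

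The main obstacle I anticipate lies in the cobounded step, where writing the algebroid equivalence ${\mathcal A}_G[X] \simeq {\mathcal A}_G[+]$ honestly requires a choice of orbit representatives together with a check that controlled $G$-invariant morphisms on $X$ correspond precisely to the morphisms of the finite-orbit model, with composition given by the correct biproduct formula. A secondary subtlety concerns the cone point in ${\mathcal O}Y$, where the $G$-action is not strictly free; however, the fibre at the cone point is a $G$-flasque subspace, so by the first part of the theorem its contribution to $\KK(D, {\mathcal A}_G[{\mathcal O}Y])$ vanishes and does not disrupt the identification with $\KK(D, {\mathcal A}[X/G])$.
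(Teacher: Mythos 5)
The paper offers no proof of this theorem, saying only that it is ``similar to theorem \ref{Ahom}'', so your outline --- re-running the non-equivariant argument with $G$-invariance carried through, using Proposition \ref{propertyA} for $G$-flasqueness and coarse $G$-homotopy invariance and Proposition \ref{propertyB} for the pushout, then treating the cobounded axiom by an algebroid equivalence ${\mathcal A}_G[BG]\simeq{\mathcal A}_G[+]$ and the local property by the push-down/pull-back equivalence ${\mathcal A}_G[{\mathcal O}Y]\simeq{\mathcal A}[{\mathcal O}Y/G]$ --- is exactly the intended route, and its substance is sound. In particular your observation that a $G$-invariant locally finite support over a cobounded space meets only finitely many orbits (each orbit of the support must meet the bounded set $B$ with $X\subseteq BG$) is the correct key to the fourth axiom.

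The one step that would fail as written is your disposal of the cone point. A nonempty bounded coarse space is never flasque (take $B=X$ in the first flasqueness axiom: $\tau^n[X]\cap X$ is never empty), so the ``fibre at the cone point'' is not $G$-flasque; and even if it were, ``its contribution vanishes'' would require a coarsely excisive decomposition separating the cone point from the rest of ${\mathcal O}Y$, which you do not have, since the cone point lies in every neighbourhood of itself in the continuously controlled structure. Fortunately no such excision is needed: the orbit of the cone point is a singleton on both sides of the quotient, so a $G$-invariant module places a single object of $\mathcal A$ there and your correspondence with modules on ${\mathcal O}Y/G$ is literally the identity over that point; freeness is only used on the complement, where it holds. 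Replacing the flasqueness remark with this observation repairs the argument. A smaller point deserving an explicit sentence: the push-forward $c_\ast$ along $c\colon X\rightarrow +$ must be defined orbitwise, since $c^{-1}[+]=X$ is cobounded rather than bounded, so the non-equivariant formula $f_\ast[M](B)=M(f^{-1}[B])$ does not literally apply in the equivariant category.
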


Now, we turn our attention to the associated isomorphism conjecture; the $KH$-isomorphism conjecture, as formulated and discussed in \cite{BL}, is a special case.  

Fix an $R$-algebra $D$, an $R$-algebroid $\mathcal A$, and consider the functor defined on cocompact topological spaces (with a coarse structure making any such space cobounded) by the formula
\[ F_G (X) = \Omega \KK (D,{\mathcal A}_G [{\mathcal O}X]) . \]

Then the {\em $KK$-assembly map} with coefficients in $\mathcal A$ is defined to be the map $c\colon F_G (X)\rightarrow F_G( +)$
induced by the constant map $X\rightarrow +$.  Exactly as in theorem 8.17 of \cite{Mitch13}, this map amounts to the same thing (up to weak equivalence) as the equivariant assembly map
\[ \partial_G \colon \Omega \KK (D, {\mathcal A}_G [{\mathcal O}X]) \rightarrow \KK (D, {\mathcal A}_G[X]  ) \]
whenever $G$ acts cocompactly on $X$.

Now, let $\underline{E}G$ be the {\em classifying space for proper actions} of $G$.  As in \cite{DL}, we describe $\underline{E}G$ as a unique (up to $G$-homotopy) $G$-$CW$-complex $E(G,{\mathcal F})$, with the property that, for as subgroup $H$, the fixed point set $\underline{E}G^H$ is empty if $H$ is infinite, and $G$-contractible if $H$ is finite.

\begin{definition}
The {\em $KK$-isomorphism conjecture} for the group $G$ with coefficients in $D$ and $\mathcal A$ asserts that the above $KK$-assembly map is a weak equivalence when $X=\underline{E}G$.
\end{definition}

Now, the descent theorem can be applied to tell us things about injectivity of the $KK$-assembly map.  To be precise, by theorem \ref{DT} and the above, we have the following result.

\begin{theorem} \label{DT}
Suppose that the space $EG$ is $G$-homotopy equivalent to a finite $G$-$CW$-complex.  Suppose we have a coarse structure on $EG$ where the $G$-action on $EG$ is by coarse maps, and the coarse assembly map $\partial \colon \KK (D,{\mathcal A} [{\mathcal O}X]) \rightarrow \KK (D, {\mathcal A}[X] $ is a weak equivalence.  Then the $KK$-assembly map is injective at the level of stable homotopy groups for the space $EG$.
\noproof
\end{theorem}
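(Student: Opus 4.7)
The plan is to invoke the first descent theorem (Theorem~\ref{DT} of Section~4) with the two specific functors constructed in this section. Take
\[
E(X) = \KK(D, {\mathcal A}[X]), \qquad E_G(X) = \KK(D, {\mathcal A}_G[X]).
\]
By Theorem~\ref{Ahom}, $E$ is coarsely excisive, and by the preceding theorem, $E_G$ is coarsely $G$-excisive and has the local property relative to $E$. So the two functorial hypotheses of the descent theorem are already in hand from earlier in the paper.

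Next I would verify the conditions on the input space. Since $EG$ carries a free $G$-action and, by hypothesis, is $G$-homotopy equivalent to a finite $G$-$CW$-complex, it meets the finiteness requirement in the descent theorem. The coarse assembly map for $E$ applied to $EG$ is assumed to be a weak equivalence, which supplies the remaining hypothesis. Feeding all this into the descent theorem yields immediately that
\[
\partial_G \colon \Omega \KK(D, {\mathcal A}_G[{\mathcal O}(EG)]) \longrightarrow \KK(D, {\mathcal A}_G[EG])
\]
is injective on stable homotopy groups.

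Finally, I would translate this injectivity to the $KK$-assembly map itself. Because $G$ acts cocompactly on $EG$ (a consequence of the finite $G$-$CW$-hypothesis), the identification noted just before the definition of the $KK$-isomorphism conjecture, which parallels Theorem~8.17 of \cite{Mitch13}, shows that $\partial_G$ agrees up to weak equivalence with the $KK$-assembly map $c \colon F_G(EG) \rightarrow F_G(+)$. Hence the latter is injective on stable homotopy groups as well.

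There is no real obstacle here beyond bookkeeping: the serious work has already been done in Theorem~\ref{Ahom}, in the equivariant version immediately following, and in the descent theorem itself. The only thing to double-check is that the cocompactness needed for the identification of $\partial_G$ with the $KK$-assembly map is indeed supplied by the finite $G$-$CW$-hypothesis, and that the coarse structure imposed on $EG$ is genuinely preserved by the $G$-action, as assumed.
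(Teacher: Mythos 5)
Your proposal is correct and follows exactly the route the paper intends: the paper gives no explicit proof but derives the statement directly from the descent theorem applied to the functors $\KK(D,{\mathcal A}[\,\cdot\,])$ and $\KK(D,{\mathcal A}_G[\,\cdot\,])$, whose required properties were established in Theorem~\ref{Ahom} and the theorem following it, together with the identification of $\partial_G$ with the $KK$-assembly map for cocompact actions. Your bookkeeping of the hypotheses (freeness, finiteness, cocompactness) matches what the paper relies on.
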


By theorem \ref{ECW}, we have the following corollary.

\begin{corollary}
Suppose that the space $EG$ is $G$-homotopy equivalent to a finite $G$-$CW$-complex.  Suppose we have a coarse structure on $EG$ where the $G$-action on $EG$ is by coarse maps, and $EG$ is coarsely homotopy equivalent to a finite coarse $CW$-complex.  Then the $KK$-assembly map is injective at the level of stable homotopy groups for the space $EG$.
\noproof
\end{corollary}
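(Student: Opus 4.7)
The plan is to reduce the corollary to Theorem~\ref{DT}. The topological hypothesis on $EG$ and the hypothesis that $G$ acts by coarse maps carry over directly; what remains is to verify that the non-equivariant coarse assembly map
\[ \partial \colon \Omega \KK(D, {\mathcal A}[{\mathcal O}EG]) \to \KK(D, {\mathcal A}[EG]) \]
is a weak equivalence, at which point Theorem~\ref{DT} delivers the desired injectivity at the level of stable homotopy groups.

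To verify this, fix a coarse homotopy equivalence $f \colon EG \to Y$ with $Y$ a finite coarse $CW$-complex, as supplied by hypothesis. By Theorem~\ref{Ahom}, the functor $E(X) = \KK(D, {\mathcal A}[X])$ is coarsely excisive, so Theorem~\ref{ECW} applied to $Y$ tells us that the coarse assembly map for $Y$ is a weak equivalence; equivalently, by the characterisation recorded after Lemma~\ref{M13}, the spectrum $E({\mathcal C}Y)$ is weakly contractible.

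It therefore suffices to establish a weak equivalence $E({\mathcal C}EG) \simeq E({\mathcal C}Y)$. The closed cone ${\mathcal C}$ is functorial on coarse maps through $(x,t) \mapsto (f(x), t)$ passed to the quotient. One then checks that it sends elementary coarse homotopies $H \colon I_p X \to Z$ to elementary coarse homotopies between the induced maps on closed cones by extending the parameter $p$ levelwise in the interval coordinate of ${\mathcal C}X$. Thus $f$ induces a coarse homotopy equivalence ${\mathcal C}f \colon {\mathcal C}EG \to {\mathcal C}Y$, and coarse homotopy invariance of the coarsely excisive functor $E$ supplies the weak equivalence we need. Hence $E({\mathcal C}EG)$ is weakly contractible, the coarse assembly map for $EG$ is a weak equivalence, and the hypotheses of Theorem~\ref{DT} are met.

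The one place that deserves care in a fully written proof is the verification that ${\mathcal C}$ preserves coarse homotopy equivalences, since it requires explicit attention to the product coarse structure on $X \times [0,1]$ and to the control of the homotopy parameter $p$ under the cone extension. Everything else is a direct concatenation of results already proved in the paper.
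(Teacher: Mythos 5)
Your argument is correct and is exactly the route the paper intends (the paper gives no proof beyond ``By theorem \ref{ECW}''): apply Theorem \ref{Ahom} to get coarse excisiveness, Theorem \ref{ECW} to the finite coarse $CW$-complex $Y$, transfer along the coarse homotopy equivalence $EG \simeq Y$ using coarse homotopy invariance of the functor, and feed the result into Theorem \ref{DT}. The one point you rightly flag --- that the closed cone (or equivalently the open cone appearing in the assembly map) carries coarse homotopy equivalences to coarse homotopy equivalences, so that the weak equivalence of assembly maps transfers from $Y$ to $EG$ --- is also left implicit by the paper, and your sketch of how to handle it is the right one.
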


\bibliographystyle{plain}

\bibliography{data}

\end{document}